\newtheorem{thm}{Theorem}[section]
\newtheorem{cor}{Corollary}[section]
\newtheorem{lem}[thm]{Lemma}
\theoremstyle{remark}
\newtheorem{rmk}{Remark}[section]
\newtheorem{que}{Question}[section]
\title[$p$-adic denseness of members of partitions of $\mathbb{N}$ and their ratio sets]{$p$-adic denseness of members of partitions of $\mathbb{N}$ and\\their ratio sets}
\author[P. Miska]{Piotr Miska}
\address{Institute of Mathematics\\ Jagiellonian University in Krak\'{o}w\\ Krak\'{o}w, Poland}
\email{piotrmiska91@gmail.com}
\author[C. Sanna]{Carlo Sanna}
\address{Department of Mathematics\\ Universit\`{a} degli Studi di Torino\\ Torino, Italy}
\email{carlo.sanna.dev@gmail.com}
\subjclass[2010]{11A07, 11B05}
\keywords{denseness, $p$-adic topology, partition, quotient set, ratio set}
\begin{document}

\maketitle

\begin{abstract}
The \emph{ratio set} of a set of positive integers $A$ is defined as $R(A) := \{a / b : a, b \in A\}$.
The study of the denseness of $R(A)$ in the set of positive real numbers is a classical topic and, more recently, the denseness in the set of $p$-adic numbers $\mathbb{Q}_p$ has also been investigated.
Let $A_1, \ldots, A_k$ be a partition of $\mathbb{N}$ into $k$ sets.
We prove that for all prime numbers $p$ but at most $\lfloor \log_2 k \rfloor$ exceptions at least one of $R(A_1), \ldots, R(A_k)$ is dense in $\mathbb{Q}_p$.
Moreover, we show that for all prime numbers $p$ but at most $k - 1$ exceptions at least one of $A_1, \ldots, A_k$ is dense in $\mathbb{Z}_p$.
Both these results are optimal in the sense that there exist partitions $A_1, \ldots, A_k$ having exactly $\lfloor \log_2 k \rfloor$, respectively $k - 1$, exceptional prime numbers; and we give explicit constructions for them.
Furthermore, as a corollary, we answer negatively a question raised by Garcia, Hong, \emph{et~al}.
\end{abstract}

\section{Introduction}

The \emph{ratio set} (or \emph{quotient set}) of a set of positive integers $A$ is defined as
\begin{equation*}
R(A) := \{a / b : a,b \in A\} .
\end{equation*}
The study of the denseness of $R(A)$ in the set of positive real numbers $\mathbb{R}_+$ is a classical topic.
For example, Strauch and T{\'o}th~\cite{MR1659159} (see also~\cite{MR1904872}) showed that $R(A)$ is dense in $\mathbb{R}_+$ whenever $A$ has lower asymptotic density at least equal to $1/2$.
Furthermore, Bukor, {\v S}al{\'a}t, and T{\'o}th~\cite{MR1475512} proved that if $\mathbb{N} = A \cup B$ for two disjoint sets $A$ and $B$, then at least one of $R(A)$ or $R(B)$ is dense in $\mathbb{R}_+$. 
On the other hand, Brown, Dairyko, Garcia, Lutz, and Someck~\cite{MR3229105} showed that there exist pairwise disjoint sets $A,B,C \subseteq \mathbb{N}$ such that $\mathbb{N} = A \cup B \cup C$ and none of $R(A)$, $R(B)$, $R(C)$ is dense in $\mathbb{R}_+$.
See also~\cite{MR1635220, MR1390582, MR0242756, MR0248107} for other related results.

More recently, the study of when $R(A)$ is dense in the $p$-adic numbers $\mathbb{Q}_p$, for some prime number $p$, has been initiated.
Garcia and Luca~\cite{MR3593645} proved that the ratio set of the set of Fibonacci numbers is dense in $\mathbb{Q}_p$, for all prime numbers $p$.
Their result has been generalized by Sanna~\cite{MR3668396}, who proved that the ratio set of the $k$-generalized Fibonacci numbers is dense in $\mathbb{Q}_p$, for all integers $k \geq 2$ and prime numbers $p$.
Furthermore, Garcia, Hong, Luca, Pinsker, Sanna, Schechter, and Starr~\cite{MR3670202} gave several results on the denseness of $R(A)$ in $\mathbb{Q}_p$.
In particular, they studied $R(A)$ when $A$ is the set of values of a Lucas sequences, the set of positive integers which are sum of $k$ squares, respectively $k$ cubes, or the union of two geometric progressions.

In this paper, we continued the study of the denseness of $R(A)$ in $\mathbb{Q}_p$.

\section{Denseness of members of partitions of $\mathbb{N}$}

Motivated by the results on partitions of $\mathbb{N}$ mentioned in the introduction, the authors of~\cite{MR3670202} showed that for each prime number $p$ there exists a partition of $\mathbb{N}$ into two sets $A$ and $B$ such that neither $R(A)$ nor $R(B)$ are dense in $\mathbb{Q}_p$~\cite[Example~3.6]{MR3670202}.
Then, they asked the following question~\cite[Problem~3.7]{MR3670202}:

\begin{que}\label{que:RARB}
Is there a partition of $\mathbb{N}$ into two sets $A$ and $B$ such that $R(A)$ and $R(B)$ are dense in no $\mathbb{Q}_p$?\footnote{Actually, in \cite[Problem~3.7]{MR3670202} it is erroneously written ``such that $A$ and $B$ are dense in no $\mathbb{Q}_p$'', so that the answer is obviously: ``Yes, pick any partion into two sets!''. Question~\ref{que:RARB} is the intended question.}
\end{que}

We show that the answer to Question~\ref{que:RARB} is negative.
In fact, we will prove even more. 
Our first result is the following:

\begin{thm}\label{thm:partZp}
Let $A_1, \ldots, A_k$ be a partition of $\mathbb{N}$ into $k$ sets.
Then, for all prime numbers $p$ but at most $k - 1$ exceptions, at least one of $A_1, \ldots, A_k$ is dense in $\mathbb{Z}_p$.
\end{thm}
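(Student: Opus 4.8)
The plan is to prove the contrapositive counting argument: if for some set of primes none of the $A_i$ is dense in $\mathbb{Z}_p$, then that set of primes can have size at most $k-1$. So I would consider the (finite or infinite) collection of \emph{exceptional} primes $p$ for which \emph{none} of $A_1, \dots, A_k$ is dense in $\mathbb{Z}_p$, and aim to show there are at most $k-1$ of them. The key is to understand what it means for a single $A_i$ to fail to be dense in $\mathbb{Z}_p$: a subset $A \subseteq \mathbb{N}$ is dense in $\mathbb{Z}_p$ precisely when for every $n \geq 1$ and every residue $r \in \{0, 1, \dots, p^n - 1\}$ there is an element of $A$ congruent to $r \pmod{p^n}$; equivalently, $A$ is dense iff its image in $\mathbb{Z}/p^n\mathbb{Z}$ is everything, for all $n$. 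Hence $A$ fails to be dense in $\mathbb{Z}_p$ iff there exist $n \geq 1$ and a residue class mod $p^n$ that $A$ entirely misses.

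First I would record this characterization and extract the useful consequence: if $A_i$ is not dense in $\mathbb{Z}_p$, there is a residue class $r_i + p^{n_i}\mathbb{Z}_p$ disjoint from $A_i$. The natural idea is to look at a single sufficiently fine modulus. Rather than juggle different $n_i$, I would take $N := \max_i n_i$ (or a common power of $p$) so that each $A_i$ misses some residue class modulo $p^N$; then the union of these $k$ missed classes would have to avoid covering the whole of $\mathbb{Z}/p^N\mathbb{Z}$ only if the $A_i$ collectively failed to cover $\mathbb{N}$, which is a contradiction since they partition $\mathbb{N}$. This suggests the real content is a covering/pigeonhole statement: because the $A_i$ partition $\mathbb{N}$, their reductions modulo $p^N$ must cover all of $\mathbb{Z}/p^N\mathbb{Z}$, so they cannot all miss a class — unless we are more careful about how the missed classes at different primes interact.

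The cleaner route, which I expect to give the sharp bound $k-1$, is to think multiplicatively across primes. For each exceptional prime $p$, each $A_i$ avoids a residue class mod some $p$-power; I would try to assign to each $A_i$ a nonzero amount of "forbidden density" at each exceptional prime and then argue that a set avoiding a residue class modulo $p^{n}$ has upper density at most $1 - p^{-n} \leq 1 - 1/p$. Summing or multiplying these density bounds across the primes should force a contradiction once there are too many exceptional primes. Concretely, with $t$ exceptional primes $p_1 < \dots < p_t$, I would attempt to construct, via the Chinese Remainder Theorem, a single arithmetic progression (a residue class modulo $\prod_j p_j^{n_j}$) contained in none of the $A_i$, contradicting that the $A_i$ cover $\mathbb{N}$. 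The combinatorics of matching which $A_i$ is avoided at which prime is where the factor $k-1$ enters, presumably through a pigeonhole or a Hall-type argument on a bipartite incidence between the $k$ parts and the $t$ exceptional primes.

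The main obstacle I anticipate is precisely this matching step: at each exceptional prime the \emph{same} $A_i$ might be the one avoiding a class, or different $A_i$'s might, and I need to combine the avoided classes across primes into one globally avoided progression. The clean statement to aim for is that if $t \geq k$ then, by pigeonhole among the $k$ parts across $t$ primes together with the CRT, one can build a residue class disjoint from the union $A_1 \cup \dots \cup A_k = \mathbb{N}$, which is absurd. I would therefore spend most of the effort setting up the right incidence structure (which part avoids which prime's class) and invoking CRT to glue avoided classes, and I would double-check the edge behavior at $p=2$ and small moduli, since the sharp examples achieving exactly $k-1$ exceptions (promised in the abstract) will dictate exactly how tight the pigeonhole must be.
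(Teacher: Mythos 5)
Your core argument is correct and complete once one small point is cleared up, and its endgame differs from the paper's. The decisive observation---which you flag as your ``main obstacle'' requiring a Hall-type or pigeonhole argument---is in fact trivial: by definition, a prime $p$ is exceptional precisely when \emph{none} of $A_1, \ldots, A_k$ is dense in $\mathbb{Z}_p$, so at \emph{every} exceptional prime \emph{every} part misses some residue class modulo a power of that prime. The bipartite incidence structure you worry about is therefore complete, and no matching argument is needed. Concretely, if there were $k$ distinct exceptional primes $p_1, \ldots, p_k$, assign part $A_i$ to prime $p_i$ arbitrarily: choose for each $i$ a class $\{m \in \mathbb{N} : m \equiv d_i \pmod{p_i^{n_i}}\}$ disjoint from $A_i$, and glue these by the Chinese Remainder Theorem (the moduli are pairwise coprime) into a single infinite residue class modulo $p_1^{n_1} \cdots p_k^{n_k}$ that is disjoint from every $A_i$, hence from $A_1 \cup \cdots \cup A_k = \mathbb{N}$, which is absurd; this gives the bound $k-1$. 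The paper proves Theorem~\ref{thm:partZp} by a CRT argument as well, but organized iteratively: Lemma~\ref{lem:partZp1} is used with $p_1, \ldots, p_{k-1}$ to nest progressions avoiding $A_1, \ldots, A_{k-1}$ in turn, until a full progression with modulus coprime to $p_k$ is contained in $A_k$; the contradiction then comes from Lemma~\ref{lem:partZp2}, since such a progression is dense in $\mathbb{Z}_{p_k}$, forcing $A_k$ to be dense there. Your one-shot gluing instead uses the $k$-th prime to avoid $A_k$ as well and derives the contradiction from the covering property; both routes use exactly $k$ primes and yield the same sharp bound. Two cautions about your exploratory remarks: the single-prime idea (working modulo one $p^N$) is a genuine dead end, since different parts may miss different classes and a single prime can indeed be exceptional (the odd/even partition at $p = 2$); and the upper-density estimates you float are unnecessary. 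Discard both, note that the matching is vacuous, and your CRT construction stands as a clean proof.
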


Then, from Theorem~\ref{thm:partZp} it follows the next corollary, which gives a strong negative answer to Question~\ref{que:RARB}.

\begin{cor}\label{cor:2.1}
Let $A_1, \ldots, A_k$ be a partition of $\mathbb{N}$ into $k$ sets. 
Then, for all prime numbers $p$ but at most $k - 1$ exceptions, at least one of $R(A_1), \ldots, R(A_k)$ is dense in $\mathbb{Q}_p$.
\end{cor}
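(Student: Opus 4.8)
The plan is to obtain Corollary~\ref{cor:2.1} directly from Theorem~\ref{thm:partZp} by proving a single implication: if a set $A \subseteq \mathbb{N}$ is dense in $\mathbb{Z}_p$, then $R(A)$ is dense in $\mathbb{Q}_p$. Granting this, Theorem~\ref{thm:partZp} guarantees that, outside a set of at most $k-1$ primes, some member $A_i$ of the partition is dense in $\mathbb{Z}_p$; the implication then forces the corresponding $R(A_i)$ to be dense in $\mathbb{Q}_p$ for each such $p$, while the exceptional set of primes is left unchanged. Thus the whole task reduces to establishing
\[
A \text{ dense in } \mathbb{Z}_p \;\Longrightarrow\; R(A) \text{ dense in } \mathbb{Q}_p ,
\]
and this is where all the work lies.

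To prove the implication, I would first reformulate the hypothesis: since the balls $z + p^N \mathbb{Z}_p$ are precisely the congruence classes modulo $p^N$, the set $A$ is dense in $\mathbb{Z}_p$ if and only if for every $z \in \mathbb{Z}_p$ and every integer $N \geq 1$ there exists $a \in A$ with $a \equiv z \pmod{p^N}$. Now fix a nonzero target $x \in \mathbb{Q}_p$ and a precision $N \geq 1$; I must produce $a, b \in A$ with $|a/b - x|_p \leq p^{-N}$. Writing $v = v_p(x)$, the clean case is $x \in \mathbb{Z}_p$, i.e.\ $v \geq 0$. There I would choose $b \in A$ with $b \equiv 1 \pmod{p^N}$, so that $b$ is a $p$-adic unit with $|b|_p = 1$, and $a \in A$ with $a \equiv x \pmod{p^N}$; both exist by denseness. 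Then $|a/b - x|_p = |a - bx|_p$, and since $a - bx = (a - x) - x(b - 1)$ with $|a - x|_p \leq p^{-N}$, $|b - 1|_p \leq p^{-N}$, and $|x|_p \leq 1$, the ultrametric inequality gives $|a/b - x|_p \leq p^{-N}$. The case $x = 0$ is entirely analogous, taking $a \in A$ with $a \equiv 0 \pmod{p^N}$ and $b$ any unit.

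The main obstacle is the case $v < 0$, where $x \notin \mathbb{Z}_p$ cannot be approximated directly by a single element of $A \subseteq \mathbb{Z}_p$. Here I would exploit that $R(A)$ is invariant under inversion, $a/b \mapsto b/a$, together with the fact that $r \mapsto r^{-1}$ is a homeomorphism of $\mathbb{Q}_p^{\times}$ carrying a neighbourhood of $x$ onto a neighbourhood of $x^{-1}$. Consequently $x$ and $x^{-1}$ lie in the closure of $R(A)$ simultaneously, and since $v_p(x^{-1}) = -v > 0$, the target $x^{-1}$ falls under the case already treated; transporting the approximation back through inversion yields elements of $R(A)$ arbitrarily close to $x$. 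This disposes of the remaining case, establishes the implication, and hence proves the corollary. (Alternatively, one can treat $v < 0$ by hand, choosing $b \in A$ with $v_p(b) = -v$ and $a \in A$ a unit with $a \equiv bx \pmod{p^{N - v}}$, noting $bx \in \mathbb{Z}_p^{\times}$; but the inversion argument avoids the extra valuation bookkeeping.)
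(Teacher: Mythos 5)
Your proposal is correct and takes essentially the same route as the paper: Corollary~\ref{cor:2.1} is deduced from Theorem~\ref{thm:partZp} via the single implication that denseness of $A$ in $\mathbb{Z}_p$ forces denseness of $R(A)$ in $\mathbb{Q}_p$. The only difference is that the paper dismisses this implication as ``easy to prove,'' whereas you supply a complete and correct verification (unit-denominator approximation for $x \in \mathbb{Z}_p$, then the inversion homeomorphism of $\mathbb{Q}_p^{\times}$ for negative valuation).
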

\begin{proof}
It is easy to prove that if $A_j$ is dense in $\mathbb{Z}_p$ then $R(A_j)$ is dense in $\mathbb{Q}_p$.
Hence, the claim follows from Theorem~\ref{thm:partZp}.
\end{proof}

The proof of Theorem~\ref{thm:partZp} requires just a couple of easy preliminary lemmas.
For positive integers $a$ and $b$, define $a + b\mathbb{N} := \{a + bk : k \in \mathbb{N}\}$.

\begin{lem}\label{lem:partZp1}
Suppose that $(a + b\mathbb{N}) \subseteq A \cup B$ for some positive integers $a,b$ and some disjoint sets $A, B \subseteq \mathbb{N}$.
If $p$ is a prime number such that $p \nmid b$ and $A$ is not dense in $\mathbb{Z}_p$, then there exist positive integers $c$ and $j$ such that $(c + bp^j\mathbb{N}) \subseteq B$.
\end{lem}
\begin{proof}
Since $A$ is not dense in $\mathbb{Z}_p$, there exist positive integers $d,j$ such that $(d + p^j\mathbb{N}) \cap A = \varnothing$. 
Hence, $(a + b\mathbb{N}) \cap (d + p^j\mathbb{N}) \subseteq B$. 
The claim follows by the Chinese Remainder Theorem, which implies that $(a + b\mathbb{N}) \cap (d + p^j\mathbb{N}) = c + bp^j\mathbb{N}$, for some positive integer $c$.
\end{proof}

\begin{lem}\label{lem:partZp2}
Let $a$ and $b$ be positive integers.
Then, $a + b \mathbb{N}$ is dense in $\mathbb{Z}_p$ for all prime numbers $p$ such that $p \nmid b$.
\end{lem}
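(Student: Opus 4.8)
The plan is to reduce denseness in $\mathbb{Z}_p$ to the solvability of linear congruences. Recall the standard characterization: a set $S \subseteq \mathbb{Z}_p$ is dense in $\mathbb{Z}_p$ precisely when, for every $x \in \mathbb{Z}_p$ and every integer $j \geq 1$, there exists $s \in S$ with $s \equiv x \pmod{p^j}$. Since the elements of $a + b\mathbb{N}$ are ordinary integers, and every $x \in \mathbb{Z}_p$ is congruent modulo $p^j$ to one of the residues $0, 1, \ldots, p^j - 1$, it suffices to show that $a + b\mathbb{N}$ meets every residue class modulo $p^j$, for every $j \geq 1$.

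The key point is that the hypothesis $p \nmid b$ makes $b$ invertible modulo $p^j$. So I would fix $j \geq 1$ and an arbitrary residue $r$ modulo $p^j$, and solve the congruence $a + bk \equiv r \pmod{p^j}$ for the unknown $k$. Because $\gcd(b, p^j) = 1$, this is equivalent to $k \equiv b^{-1}(r - a) \pmod{p^j}$, which has a solution, indeed infinitely many solutions $k \in \mathbb{N}$. For any such $k$, the integer $a + bk$ lies in $a + b\mathbb{N}$ and satisfies $a + bk \equiv r \pmod{p^j}$, which is exactly what is needed.

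There is essentially no obstacle here; the only steps requiring (routine) care are invoking the congruence characterization of denseness in $\mathbb{Z}_p$ and noting that $\mathbb{N}$ contains infinitely many integers in each residue class modulo $p^j$, so that one can select a genuine member of $a + b\mathbb{N}$ rather than merely of $a + b\mathbb{Z}$. Alternatively, one may argue topologically: since $p \nmid b$, the integer $b$ is a unit in $\mathbb{Z}_p$, so the affine map $x \mapsto a + bx$ is a homeomorphism of $\mathbb{Z}_p$ with $a + b\mathbb{Z}_p = \mathbb{Z}_p$; as $\mathbb{N}$ is dense in $\mathbb{Z}_p$ and homeomorphisms preserve denseness, the image $a + b\mathbb{N}$ is dense in $\mathbb{Z}_p$.
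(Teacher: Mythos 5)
Your proof is correct and is essentially the paper's argument with the details spelled out: the paper's one-line appeal to the Chinese Remainder Theorem and the denseness of $\mathbb{N}$ in $\mathbb{Z}_p$ is precisely your observation that $b$ is invertible modulo $p^j$, so the progression $a + b\mathbb{N}$ meets every residue class modulo $p^j$. The alternative topological argument you sketch (the map $x \mapsto a + bx$ being a homeomorphism of $\mathbb{Z}_p$) is a pleasant repackaging of the same fact, not a genuinely different route.
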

\begin{proof}
It is follows from the Chinese Remainder Theorem and the fact that $\mathbb{N}$ is dense in~$\mathbb{Z}_p$.
\end{proof}

We are now ready for the proof of Theorem~\ref{thm:partZp}.

\begin{proof}[Proof of Theorem~\ref{thm:partZp}]
For the sake of contradiction, suppose that $p_1, \ldots, p_k$ are $k$ pairwise distinct prime numbers such that none of $A_1, \ldots, A_k$ is dense in $\mathbb{Z}_{p_i}$ for $i = 1,\ldots,k$.
Since $A_1$ is not dense in $\mathbb{Z}_{p_1}$, there exist positive integers $c_1$ and $j_1$ such that $(c_1 + p_1^{j_1}\mathbb{N}) \cap A_1 = \varnothing$.
Hence, $(c_1 + p_1^{j_1}\mathbb{N}) \subseteq A_2 \cup \cdots \cup A_k$ and, thanks to Lemma~\ref{lem:partZp1}, there exist positive integers $c_2$ and $j_2$ such that $(c_2 + p_1^{j_1} p_2^{j_2}\mathbb{N}) \subseteq A_3 \cup \cdots \cup A_k$.
Continuing this process, we get that $(c_{k-1} + p_1^{j_1} \cdots p_{k-1}^{j_{k-1}}\mathbb{N}) \subseteq A_k$, for some positive integers $c_{k-1}, j_1, \ldots, j_{k-1}$.
By Lemma~\ref{lem:partZp2}, this last inclusion implies that $A_k$ is dense in $\mathbb{Z}_{p_k}$, but this contradicts the hypotheses.
\end{proof}

\begin{rmk}\label{rmk:partZp}
In fact, Theorem~\ref{thm:partZp} can be strengthen in the following way: For each partition $A_1, \ldots, A_k$ of $\mathbb{N}$ there exists a member $A_j$ of this partition which is dense in $\mathbb{Z}_p$ for all but at most $k-1$ prime numbers $p$.

Indeed, for the sake of contradiction, suppose that each member $A_j$ of the partition $A_1, \ldots, A_k$ of $\mathbb{N}$ has at least $k$ prime numbers $p$ such that $A_j$ is not dense in $\mathbb{Z}_p$. 
Then we can choose prime numbers $p_1, \ldots, p_k$ such that for each $j\in\{1,\ldots,k\}$ the set $A_j$ is not dense in $\mathbb{Z}_{p_j}$. 
Next, we provide the reasoning from the proof of Theorem~\ref{thm:partZp} to reach a contradiction.
\end{rmk}

The next result shows that the quantity $k - 1$ in Theorem~\ref{thm:partZp} cannot be improved.

\begin{thm}
Let $k \geq 2$ be an integer and let $p_1, \ldots, p_{k-1}$ be pairwise distinct prime numbers.
Then, there exists a partition $A_1, \ldots, A_k$ of $\mathbb{N}$ such that none of $A_1, \ldots, A_k$ is dense in $\mathbb{Z}_{p_i}$ for $i=1, \ldots, k - 1$.
\end{thm}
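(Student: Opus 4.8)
The plan is to give an explicit coloring of $\mathbb{N}$ by residue classes. First I would choose, for each $i \in \{1, \dots, k-1\}$, an exponent $m_i$ large enough that $q_i := p_i^{m_i} \geq k$, and for $n \in \mathbb{N}$ record the residue coordinates $r_i(n) := n \bmod q_i \in \{0, 1, \dots, q_i - 1\}$. The idea is to color $n$ by a value in $\{1, \dots, k\}$ that is \emph{missing} among its coordinates: I would set $c(n)$ to be the least $j \in \{1, \dots, k\}$ such that $j - 1 \notin \{r_1(n), \dots, r_{k-1}(n)\}$, and define $A_j := \{n \in \mathbb{N} : c(n) = j\}$. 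Note that since $j - 1 \leq k - 1 < k \leq q_i$, the value $j-1$ is a legitimate residue modulo $q_i$, so the balls to be avoided below are genuinely nonempty.

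The crucial point, and the place where the bound $k-1$ enters, is that $c$ is well defined: the set $\{r_1(n), \dots, r_{k-1}(n)\}$ has at most $k-1$ elements, whereas $\{0, 1, \dots, k-1\}$ has $k$ elements, so some value is always missing and the least such $j$ exists. This pigeonhole step is exactly what would fail with $k$ primes in place of $k-1$, and it is the heart of the optimality. It is then immediate that $A_1, \dots, A_k$ is a partition of $\mathbb{N}$, since every $n$ receives exactly one color.

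It remains to verify that no $A_j$ is dense in any $\mathbb{Z}_{p_i}$. I would show that $A_j$ avoids the ball $\{x \in \mathbb{Z}_{p_i} : x \equiv j - 1 \pmod{p_i^{m_i}}\}$: if $n \equiv j - 1 \pmod{q_i}$, that is $r_i(n) = j - 1$, then $j - 1 \in \{r_1(n), \dots, r_{k-1}(n)\}$, so by construction $c(n) \neq j$ and $n \notin A_j$. Thus $A_j$ misses a nonempty open (indeed clopen) subset of $\mathbb{Z}_{p_i}$, whence its closure misses that subset as well and $A_j$ is not dense there. Since $i \in \{1, \dots, k-1\}$ and $j \in \{1, \dots, k\}$ were arbitrary, this establishes the required conclusion.

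The only routine matters left to the write-up are notational: that the $q_i$ are pairwise coprime (as the $p_i$ are distinct), which by the Chinese Remainder Theorem lets the coordinates $r_i$ be prescribed independently, and, if one insists the parts be nonempty, a short application of the same theorem producing an explicit representative in each $A_j$. I do not anticipate any genuine obstacle beyond correctly tracking that $k-1$ coordinates can block at most $k-1$ of the $k$ available colors; everything else follows directly from the construction and the definition of the $p$-adic topology.
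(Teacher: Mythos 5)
Your proof is correct and is essentially the paper's own argument: both choose exponents with $p_i^{e_i} \geq k$ and color each $n$ by a value in $\{1,\ldots,k\}$ that is missing (by pigeonhole, since there are only $k-1$ coordinates) from its residue vector modulo the $p_i^{e_i}$, so that $A_j$ avoids the residue class $j-1 \pmod{p_i^{e_i}}$ in every $\mathbb{Z}_{p_i}$. The only difference is cosmetic: you fix the canonical ``least missing value'' as the color, whereas the paper assigns an arbitrary missing value algorithmically to each residue vector.
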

\begin{proof}
Let $e_1, \ldots, e_{k-1}$ be positive integers such that $p_i^{e_i} \geq k$ for $i=1,\ldots,k-1$, and put 
\begin{equation*}
V := \{0, \ldots, p_1^{e_1} - 1\} \times \cdots \times \{0, \ldots, p_{k-1}^{e_{k-1}} - 1\} .
\end{equation*}
We shall construct a partition $R_0, \ldots, R_{k-1}$ of $V$ (note that the indices of $R_i$ start from $0$) such that if $(r_1, \ldots, r_{k-1}) \in R_j$ then none of the components $r_1, \ldots, r_{k-1}$ is equal to $j$.
Then, we define
\begin{equation*}
A_j := \{n \in \mathbb{N}: \exists (r_1, \ldots, r_{k-1}) \in R_{j-1}, \; \forall i = 1, \ldots, k - 1, \quad n \equiv r_i \pmod {p_i^{e_i}} \} ,
\end{equation*}
for $j = 1, \ldots, k$.
At this point, it follows easily that $A_1, \ldots, A_k$ is a partition of $\mathbb{N}$, and that none of $A_1, \ldots, A_k$ is dense in $\mathbb{Z}_{p_i}$, since $A_{j+1}$ misses the residue class $\equiv j \pmod {p_i^{e_i}}$.

The construction of $R_0, \ldots, R_{k-1}$ is algorithmic.
We start with $R_0, \ldots, R_{k-1}$ all empty.
Then, we pick a vector $\mathbf{x} \in V$ which is not already in $R_0 \cup \cdots \cup R_{k-1}$. 
It is easy to see that there exists some $j\in\{0,...,k-1\}$ such that $j$ does not appear as a component of $\mathbf{x}$. 
We thus throw $\mathbf{x}$ into $R_j$.
We continue this process until all the vectors in $V$ have been picked.

Now, by the construction it is clear that $R_0, \ldots, R_{k-1}$ is a partition of $V$ satisfying the desired property.
\end{proof}

\section{Denseness of ratio sets of members of partitions of $\mathbb{N}$}

The result in Corollary \ref{cor:2.1} is not optimal.
Let $\lfloor x \rfloor$ denote the greatest integer not exceeding $x$, and write $\log_2$ for the base $2$ logarithm.
Our next result is the following:

\begin{thm}\label{thm:partQp}
Let $A_1, \ldots, A_k$ be a partition of $\mathbb{N}$ into $k$ sets.
Then, for all prime numbers $p$ but at most $\lfloor \log_2 k\rfloor$ exceptions, at least one of $R(A_1), \ldots, R(A_k)$ is dense in $\mathbb{Q}_p$.
\end{thm}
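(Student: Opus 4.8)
The plan is to prove the contrapositive in quantitative form: if $p_1,\dots,p_r$ are pairwise distinct primes such that \emph{none} of $R(A_1),\dots,R(A_k)$ is dense in any of $\mathbb{Q}_{p_1},\dots,\mathbb{Q}_{p_r}$, then $k\ge 2^r$. Since $2^{\lfloor\log_2 k\rfloor}\le k<2^{\lfloor\log_2 k\rfloor+1}$, the bound $2^r\le k$ forces $r\le\lfloor\log_2 k\rfloor$, which is the assertion. The first step is to record a usable form of non-density. Writing each $n\in\mathbb{N}$ as $n=p^{v_p(n)}\tilde n$ with $\tilde n\in\mathbb{Z}_p^\times$, the failure of $R(A)$ to be dense in $\mathbb{Q}_p$ means some ball is missed, and unwinding this yields a \emph{missed datum} $(t,\bar u)\in\mathbb{Z}\times(\mathbb{Z}/p^s\mathbb{Z})^\times$ such that no $a,b\in A$ satisfy simultaneously $v_p(a)-v_p(b)=t$ and $\tilde a\tilde b^{-1}\equiv\bar u\pmod{p^s}$; the degenerate case where the ratios have bounded valuation is absorbed by choosing $t$ large, so that the condition on $a,b$ becomes vacuous. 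Equivalently, whenever $\ell-\ell'=t$, the reductions modulo $p^s$ of $\{\tilde a:a\in A,\ v_p(a)=\ell\}$ and of $\bar u\{\tilde b:b\in A,\ v_p(b)=\ell'\}$ are disjoint.

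The engine is a joint $p$-adic density. For the primes $p_1,\dots,p_r$ put $\Omega=\prod_{i=1}^r\mathbb{Z}_{p_i}^\times$ with its Haar probability measure $\nu$, and for $A\subseteq\mathbb{N}$ and a valuation vector $\boldsymbol\ell=(\ell_1,\dots,\ell_r)\in\mathbb{Z}_{\ge0}^r$ let $S_{\boldsymbol\ell}^A\subseteq\Omega$ be the closure of the set of unit vectors $\bigl(n\,p_i^{-v_{p_i}(n)}\bigr)_{i=1}^r$ arising from those $n\in A$ with $v_{p_i}(n)=\ell_i$ for all $i$. I would then define $\delta(A):=\limsup_{N\to\infty}N^{-r}\sum_{\boldsymbol\ell\in[0,N)^r}\nu(S_{\boldsymbol\ell}^A)$, weighting the valuation vectors uniformly over a box and the unit parts by Haar measure. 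Two features make this the right notion. First, for every fixed $\boldsymbol\ell$ the Chinese Remainder Theorem shows $\bigcup_j S_{\boldsymbol\ell}^{A_j}$ is dense in $\Omega$, so $\sum_j\nu(S_{\boldsymbol\ell}^{A_j})\ge1$ and hence $\sum_j\delta(A_j)\ge1$. Second, I will show that each bad prime costs $A_j$ a factor $\tfrac12$ in density, i.e.\ $\delta(A_j)\le 2^{-r}$; combined, these give $1\le\sum_j\delta(A_j)\le k\,2^{-r}$, whence $k\ge2^r$.

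The halving is isolated in the following lemma, proved by induction on $r$: if a family $\{S_{\boldsymbol\ell}\}_{\boldsymbol\ell\in\mathbb{Z}_{\ge0}^r}$ of subsets of $\Omega$ satisfies, for each coordinate $i$, a missed-datum disjointness — there are $t_i\ge0$ and a unit $u_i$ so that the $i$-th coordinate reductions of $S_{\boldsymbol\ell}$ and $u_iS_{\boldsymbol\ell'}$ are disjoint whenever $\ell_i-\ell'_i=t_i$, \emph{with the remaining coordinates of $\boldsymbol\ell,\boldsymbol\ell'$ completely free} — then $\limsup_N N^{-r}\sum_{\boldsymbol\ell\in[0,N)^r}\nu(S_{\boldsymbol\ell})\le 2^{-r}$. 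To run the induction I peel off coordinate $i=r$: when $t_r>0$ I merge the two levels $\ell_r$ and $\ell_r-t_r$ into $\widetilde S_{\boldsymbol\ell'}:=S_{(\boldsymbol\ell',\ell_r)}\sqcup u_r^{(r)}S_{(\boldsymbol\ell',\ell_r-t_r)}$, a \emph{disjoint} union by the $r$-th datum (here $u_r^{(r)}$ multiplies the $r$-th coordinate only), with $\nu(\widetilde S_{\boldsymbol\ell'})=\nu(S_{(\boldsymbol\ell',\ell_r)})+\nu(S_{(\boldsymbol\ell',\ell_r-t_r)})$; when $t_r=0$ I instead take $\widetilde S_{\boldsymbol\ell}:=S_{\boldsymbol\ell}\sqcup u_r^{(r)}S_{\boldsymbol\ell}$, of measure $2\nu(S_{\boldsymbol\ell})$. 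The decisive point is that, because each remaining constraint holds with the other valuations arbitrary, the merged family still satisfies the $r-1$ disjointnesses for $p_1,\dots,p_{r-1}$; the inductive hypothesis bounds its $(r-1)$-dimensional average by $2^{-(r-1)}$. Writing $G(\ell_r)$ for the analogous average of $\nu(S_{(\boldsymbol\ell',\ell_r)})$ at fixed $\ell_r$, this says $G(\ell_r)+G(\ell_r-t_r)\le 2^{-(r-1)}$ (resp.\ $2G(\ell_r)\le2^{-(r-1)}$), and averaging over $\ell_r$ produces the final factor $2^{-r}$.

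The main obstacle is exactly this lemma, and inside it the observation that a missed datum at $p_i$ is a constraint that \emph{ignores the valuations at the other primes}; this uniformity is what allows the merged family to inherit the remaining constraints, and it is what forces base $2$ rather than base $p_i$ — in harmony with the extremal partitions defined by the parities $v_{p_i}(n)\bmod 2$, for which $\delta(A_j)=2^{-r}$ with equality. The surrounding points are routine but need care: verifying that non-density always produces a missed datum (including the bounded-valuation degenerate case), controlling the interchange of the various $\limsup$'s through the peeling, and confirming via the Chinese Remainder Theorem that $\bigcup_j S_{\boldsymbol\ell}^{A_j}$ is dense in $\Omega$ so that the densities sum to at least $1$.
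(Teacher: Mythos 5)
Your strategy is sound and, modulo one technical point discussed below, it proves the theorem; but its architecture is genuinely different from the paper's, so let me compare. The paper argues by contradiction with $\ell = \lfloor\log_2 k\rfloor + 1$ exceptional primes: after normalizing all missed balls to a common modulus $p_j^w$ and a common valuation window $[0,t)$ (using closure of ratio sets under reciprocals, exactly as you do), it sets $c := k^{-1/\ell}$, which is $>1/2$ precisely because $k < 2^\ell$, picks $m > t/(2c-1)$, and applies two pigeonhole steps to the $m^\ell\prod_j\varphi(p_j^w)$ boxes $\bigcap_j\bigl((a_j)_{p_j^w}\cap\nu_{p_j}^{-1}(s_j)\bigr)$: some $A_{i_0}$ meets at least a $c^\ell = 1/k$ fraction of the boxes, hence at some prime $p_{j_0}$ it meets at least a $c$ fraction of the single-prime slices; Lemma~\ref{lem:partQp1} (if $X$ occupies more than half of the slices in a long window, then $R(X)$ meets every slice with valuation in $[0,t)$, proved by the intersection trick $\#A+\#B>\#V_{p^w,t,m}$) then contradicts the missed ball. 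At the single-prime level your halving lemma is precisely the contrapositive of Lemma~\ref{lem:partQp1}, but your multi-prime bookkeeping is different: instead of the $k^{-1/\ell}$ pigeonhole you introduce the joint density $\delta$ on $\prod_i\mathbb{Z}_{p_i}^\times$, prove $\sum_j\delta(A_j)\ge 1$ by the Chinese Remainder Theorem, and get the multiplicative bound $\delta(A_j)\le 2^{-r}$ by induction, using the correct and essential observation that a missed datum at $p_i$ constrains unit parts uniformly in the valuations at the other primes, so level-merging at one prime preserves the constraints at the rest. The paper's route is shorter and entirely finite; yours isolates a cleaner structural dichotomy (additivity of density versus halving per bad prime), yields the finer quantitative statement $\delta(A_j)\le 2^{-r}$, and makes the sharpness construction of Theorem~\ref{thm:optQp} transparent, since the parity partitions attain $\delta = 2^{-r}$ exactly.

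The one step you must repair is the limsup bookkeeping that you flag at the end. As written, the inequality $G(\ell_r)+G(\ell_r-t_r)\le 2^{-(r-1)}$ does not follow from the inductive hypothesis if $G(\ell_r)$ denotes $\limsup_N$ of the $(r-1)$-dimensional average: the inductive hypothesis bounds the limsup of the \emph{sum}, and the limsup of a sum is only bounded above by the sum of the limsups, not below; more seriously, the inductive bound is asymptotic for each fixed $\ell_r$, while your final average runs over $\ell_r < N$, so the rate of convergence must be uniform in $\ell_r$. The standard fix works: prove the quantitative statement that $N^{-r}\sum_{\boldsymbol\ell\in[0,N)^r}\nu(S_{\boldsymbol\ell})\le 2^{-r}+C_r/N$ for all $N$, where $C_r$ depends only on $t_1,\dots,t_r$ (equivalently, replace $\nu(S_{\boldsymbol\ell})$ by the normalized cardinality of its reduction modulo $\prod_i p_i^{s_i}$, so that everything is finite counting). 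The base case is trivial, the merged families satisfy the hypotheses with the same $t_1,\dots,t_{r-1}$ and hence the same $C_{r-1}$ uniformly in $\ell_r$, and pairing consecutive terms along arithmetic progressions of step $t_r$ (when $t_r>0$; when $t_r=0$ one gets $2G_N(\ell_r)\le 2^{-(r-1)}+C_{r-1}/N$ directly) loses at most $t_r$ unpaired terms, each bounded by $1$, which is absorbed into $C_r/N$. With this finite-$N$ formulation the induction closes and your proof is complete.
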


Before proving Theorem~\ref{thm:partQp}, we need to introduce some notation.
For a prime number $p$ and a positive integer $w$, we identify the group $(\mathbb{Z}/p^w\mathbb{Z})^*$ with $\{a \in \{1, \ldots, p^w\} : p \nmid a\}$.
Moreover, for each $a \in (\mathbb{Z}/p^w\mathbb{Z})^*$ we define
\begin{equation*}
(a)_{p^w} := \left\{x \in \mathbb{Q}_p^* : x / p^{\nu_p(x)} \equiv a \bmod p^w \right\} ,
\end{equation*}
where, as usual, $\nu_p$ denotes the $p$-adic valuation.
Note that the family of sets
\begin{equation*}
(a)_{p^w} \cap \nu_p^{-1}(s) = \{(a + rp^w)p^s : r \in \mathbb{Z}_p\}
\end{equation*}
where $w$ is a positive integer, $a \in (\mathbb{Z}/p^w\mathbb{Z})^*$, and $s \in \mathbb{Z}$, is a basis of the topology of $\mathbb{Q}_p^*$.
Finally, for all integers $t \leq m$ and for each set $X \subseteq \mathbb{N}$, we define
\begin{equation*}
V_{p^w, t, m} := \left\{(a)_{p^w} \cap \nu_p^{-1}(s) : a \in (\mathbb{Z}/p^w\mathbb{Z})^*,\; s \in \mathbb{Z} \cap {[t, m[} \right\}
\end{equation*}
and
\begin{equation*}
V_{p^w, t, m}(X) := \{I \in V_{p^w, t, m} : X \cap I \neq \varnothing \} .
\end{equation*}
Note that it holds the following trivial upper bound
\begin{equation*}
\#V_{p^w, t, m}(X) \leq \#V_{p^w, t, m} = (m - t) \varphi(p^w) ,
\end{equation*}
where $\varphi$ is the Euler's totient function.

Now we are ready to state a lemma that will be crucial in the proof of Theorem~\ref{thm:partQp}.

\begin{lem}\label{lem:partQp1}
Fix a prime number $p$, two positive integers $w$, $t$, a real number $c > 1/2$, and a set $X \subseteq \mathbb{N}$.
Suppose that $\#V_{p^w, 0, m}(X) \geq c m \,\varphi(p^w)$ for some positive integer $m > t / (2c - 1)$.
Then the ratio set $R(X)$ intersects nontrivially with each set in $V_{p^w, 0, t}$.
\end{lem}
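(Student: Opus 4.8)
The plan is to translate the analytic statement ``$R(X)$ meets $(\alpha)_{p^w}\cap\nu_p^{-1}(\sigma)$'' into a purely combinatorial statement about which unit residues modulo $p^w$ are realized by the elements of $X$ at each $p$-adic valuation level, and then to run a pigeonhole/counting argument that exploits the hypothesis $c>1/2$. First I would fix an arbitrary target $I = (\alpha)_{p^w}\cap\nu_p^{-1}(\sigma) \in V_{p^w,0,t}$, so that $\alpha\in(\mathbb{Z}/p^w\mathbb{Z})^*$ and $0\le\sigma<t$, and record the elementary fact that for $x,y\in X$ one has $x/y\in I$ precisely when $\nu_p(x)-\nu_p(y)=\sigma$ and $x/p^{\nu_p(x)}\equiv\alpha\,(y/p^{\nu_p(y)})\pmod{p^w}$. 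For each integer $s\in[0,m)$ I would introduce $T_s:=\{a\in(\mathbb{Z}/p^w\mathbb{Z})^* : (a)_{p^w}\cap\nu_p^{-1}(s)\in V_{p^w,0,m}(X)\}$, the set of unit residues realized by $X$ at valuation $s$, and set $n_s:=\#T_s$. By the very definition of $V_{p^w,0,m}(X)$ this gives $\sum_{s=0}^{m-1} n_s = \#V_{p^w,0,m}(X)\ge cm\,\varphi(p^w)$. In this language, $R(X)\cap I\neq\varnothing$ holds as soon as $T_{s+\sigma}\cap\alpha T_s\neq\varnothing$ for some $s$ with $s,s+\sigma\in[0,m)$, where $\alpha T_s:=\{\alpha a : a\in T_s\}$ has the same cardinality $n_s$ because multiplication by $\alpha$ permutes $(\mathbb{Z}/p^w\mathbb{Z})^*$.

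Next I would argue by contradiction, supposing $R(X)\cap I=\varnothing$. Then $T_{s+\sigma}$ and $\alpha T_s$ are \emph{disjoint} subsets of $(\mathbb{Z}/p^w\mathbb{Z})^*$ for every admissible $s$, so $n_{s+\sigma}+n_s=\#T_{s+\sigma}+\#(\alpha T_s)\le\varphi(p^w)$; in the degenerate case $\sigma=0$ the same disjointness instead forces $2n_s\le\varphi(p^w)$, i.e. $n_s\le\varphi(p^w)/2$. The heart of the argument is to sum these pairwise constraints efficiently. For $\sigma\ge1$ I would partition the valuation levels $0,1,\dots,m-1$ into the $\sigma$ arithmetic progressions modulo $\sigma$; along a progression the constraint links consecutive terms, so grouping those terms into consecutive pairs (each pair summing to at most $\varphi(p^w)$, with a possible leftover single term) yields $\sum n_s\le\lceil\ell/2\rceil\,\varphi(p^w)$ for a progression of length $\ell$. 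Since the progression lengths sum to $m$ and $\lceil\ell/2\rceil\le(\ell+1)/2$, this produces the uniform bound
\begin{equation*}
\sum_{s=0}^{m-1} n_s \le \frac{m+\sigma}{2}\,\varphi(p^w),
\end{equation*}
which one checks directly also holds when $\sigma=0$.

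Finally I would combine the two estimates for $\sum_{s} n_s$: from $cm\,\varphi(p^w)\le\frac{m+\sigma}{2}\varphi(p^w)$ and $c>1/2$ one obtains $(2c-1)m\le\sigma<t$, hence $m<t/(2c-1)$, contradicting the hypothesis $m>t/(2c-1)$. Therefore $R(X)\cap I\neq\varnothing$ for every $I\in V_{p^w,0,t}$, which is the assertion of the lemma. I expect the main obstacle to be the second step above, namely packaging the many pairwise inequalities $n_{s+\sigma}+n_s\le\varphi(p^w)$ into the single clean bound $\tfrac{m+\sigma}{2}\varphi(p^w)$: this is exactly the point where the threshold $c>1/2$ and the condition $m>t/(2c-1)$ must be made to interact precisely, and the progression-by-progression bookkeeping has to be handled with care (in particular the ceiling losses must be controlled by the ``$+\sigma$'' slack, not more). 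By contrast, the dictionary set up in the first step and the disjointness deduction are routine, provided one keeps careful simultaneous track of valuation levels and unit residues.
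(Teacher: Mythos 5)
Your proof is correct: the dictionary between ``$R(X)$ meets $(\alpha)_{p^w}\cap\nu_p^{-1}(\sigma)$'' and ``$T_{s+\sigma}\cap\alpha T_s\neq\varnothing$ for some admissible $s$'' is sound, the disjointness inequalities $n_{s+\sigma}+n_s\le\varphi(p^w)$ follow, and your aggregation over residue classes modulo $\sigma$ does yield $\sum_{s=0}^{m-1}n_s\le\tfrac{m+\sigma}{2}\,\varphi(p^w)$, which contradicts $\sum_{s}n_s\ge cm\,\varphi(p^w)$ precisely because $(2c-1)m>t>\sigma$. The underlying idea is the same as the paper's, namely a pigeonhole between the family of boxes hit by $X$ and a copy of that family shifted in residue (multiplication by $\alpha$) and in valuation (translation by $\sigma$); but your bookkeeping is genuinely different, and the step you flagged as the main obstacle is exactly the one the paper sidesteps. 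The paper never decomposes into progressions: it works inside the single window $V_{p^w,t,m}$ and compares two of its subsets, $A:=V_{p^w,t,m}(X)$ and the shifted copy $B$ of $V_{p^w,t-\sigma,m-\sigma}(X)$. Counting on the valuation ranges, together with the trivial bounds $\#V_{p^w,0,t}(X)\le t\,\varphi(p^w)$ and $\#V_{p^w,m-\sigma,m}(X)\le\sigma\,\varphi(p^w)$, gives $\#A,\#B\ge(cm-t)\,\varphi(p^w)>\tfrac12(m-t)\,\varphi(p^w)$, so $A\cap B\neq\varnothing$, and any common element produces the desired ratio directly. In your language this amounts to summing the pairwise inequalities $n_u+n_{u-\sigma}\le\varphi(p^w)$ only over $u\in[t,m)$, where every index $u-\sigma$ stays in range automatically because $\sigma<t$; hence no ceiling losses, no case distinction between $\sigma=0$ and $\sigma\ge1$, and no argument by contradiction (the paper's proof is direct). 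What your route buys is the clean intermediate estimate $\sum_{s}n_s\le\tfrac{m+\sigma}{2}\,\varphi(p^w)$, which isolates exactly how the slack $\sigma<t$ is spent; what the paper's route buys is brevity and uniformity in $\sigma$. The numerical hypotheses $c>1/2$ and $m>t/(2c-1)$ do identical work in both arguments.
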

\begin{proof}
Given $(a_0)_{p^w} \cap \nu_p^{-1}(s_0) \in V_{p^w, 0, t}$ we have to prove that $R(X) \cap (a_0)_{p^w} \cap \nu_p^{-1}(s_0) \neq \varnothing$.
For the sake of convenience, define $A := V_{p^w, t, m}(X)$ and
\begin{equation*}
B := \{(a_0a)_{p^w} \cap \nu_p^{-1}(s_0 + s) : (a)_{p^w} \cap \nu_p^{-1}(s) \in V_{p^w, t - s_0, m - s_0}(X) \} .
\end{equation*}
We have
\begin{equation}\label{equ:Abound}
\#A = \#V_{p^w, 0, m}(X) - \#V_{p^w, 0, t}(X) \geq (c m - t) \varphi(p^w) > \frac1{2} (m - t) \varphi(p^w) ,
\end{equation}
where we used the inequality $m > t / (2c - 1)$.
Similarly,
\begin{align}\label{equ:Bbound}
\#B &= \#V_{p^w, 0, m}(X) - \#V_{p^w, 0, t - s_0}(X) - \#V_{p^w, m - s_0, m}(X) \nonumber\\
&\geq (cm - (t-s_0) - s_0)\varphi(p^w) > \frac1{2} (m - t) \varphi(p^w) .
\end{align}
Now $A$ and $B$ are both subsets of $V_{p^w, t, m}$, while $\#V_{p^w, t, m} = (m - t) \varphi(p^w)$.
Therefore, (\ref{equ:Abound}) and~(\ref{equ:Bbound}) imply that $A \cap B \neq \varnothing$.
That is, there exist $(a_1)_{p^w} \cap \nu_p^{-1}(s_1) \in A$ and $(a_2)_{p^w} \cap \nu_p^{-1}(s_2) \in V_{p^w, t - s_0, m - s_0}(X)$ such that $a_1 / a_2 \equiv a_0 \pmod {p^w}$ and $s_1 - s_2 = s_0$, so that $R(X) \cap (a_0)_{p^w} \cap \nu_p^{-1}(s_0) \neq \varnothing$, as claimed.
\end{proof}

\begin{proof}[Proof of Theorem~\ref{thm:partQp}]
For the sake of contradiction, put $\ell := \lfloor \log_2 k \rfloor + 1$ and suppose that $p_1, \ldots, p_\ell$ are $\ell$ pairwise distinct prime numbers such that none of $R(A_1), \ldots, R(A_k)$ is dense in $\mathbb{Q}_{p_i}$ for $i = 1, \ldots, \ell$.
Hence, there exist positive integers $w$ and $t$ such that for each $i \in \{1, \ldots, k\}$ and each $j \in \{1, \ldots, \ell\}$ we have $R(A_i) \cap (a_{i,j})_{p_j^w} \cap \nu_{p_j}^{-1}(s_{i,j}) = \varnothing$, for some $a_{i,j} \in (\mathbb{Z}/p_j^w\mathbb{Z})^*$ and some $s_{i,j} \in \{-(t-1), \ldots, t-1\}$.
Clearly, since ratio sets are closed under taking reciprocals, we can assume $s_{i,j} \geq 0$.
Put $c := 1 / \sqrt[\ell]{k}$, so that $c > 1/2$, and pick a positive integer $m > t / (2c - 1)$.
There are 
\begin{equation*}
N := m^\ell \prod_{j=1}^\ell \varphi(p_j^w)
\end{equation*}
sets of the form
\begin{equation}\label{equ:bigcap}
\bigcap_{j = 1}^\ell \left((a_j)_{p_j^w} \cap \nu_{p_j}^{-1}(s_j)\right)
\end{equation}
where $a_j \in (\mathbb{Z}/p_j^w\mathbb{Z})^*$ and $s_j \in \{0, \ldots, m - 1\}$.
Therefore, there exists $i_0 \in \{1, \ldots, k\}$ such that $A_{i_0}$ intersects nontrivially with at least $N / k$ of the sets of form (\ref{equ:bigcap}).
Consequently, there exists $j_0 \in \{1, \ldots, \ell\}$ such that $A_{i_0}$ intersects nontrivially with at least $c m \varphi(p_{j_0}^w)$ sets of the form $(a)_{p_{j_0}^w} \cap \nu_{p_{j_0}}^{-1}(s)$, where $a \in (\mathbb{Z}/p_{j_0}^w\mathbb{Z})^*$ and $s \in \{0, \ldots, m - 1\}$.
In other words, $\#V_{p_{j_0}^w, 0, m}(A_{i_0}) \geq c m \varphi(p_{j_0}^w)$.
Hence, by Lemma~\ref{lem:partQp1}, the set $R(A_{i_0})$ intersects notrivially with all the sets of the form $(a)_{p_{j_0}^w} \cap \nu_{p_{j_0}}^{-1}(s)$, where $a \in (\mathbb{Z}/p_{j_0}^w\mathbb{Z})^*$ and $s \in \{0, \ldots, t-1\}$, but this is in contradiction with the fact that $R(A_{i_0}) \cap (a_{i_0, j_0})_{p_{j_0}^w} \cap \nu_{p_{j_0}}^{-1}(s_{i_0, j_0}) = \varnothing$.
\end{proof}

The bound $\lfloor \log_2 k \rfloor$ in Theorem~\ref{thm:partQp} is sharp in the following sense:

\begin{thm}\label{thm:optQp}
Let $k \geq 2$ be an integer and let $p_1< \ldots< p_\ell$ be $\ell := \lfloor \log_2 k \rfloor$ pairwise distinct prime numbers.
Then, there exists a partition of $\mathbb{N}$ into $k$ sets $A_1, \ldots, A_k$ such that none of $R(A_1), \ldots, R(A_k)$ is dense in $\mathbb{Q}_{p_i}$ for $i = 1, \ldots, \ell$.
\end{thm}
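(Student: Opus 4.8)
The plan is to build the partition from the joint parity pattern of the $p_j$-adic valuations. For each vector $\varepsilon = (\varepsilon_1, \ldots, \varepsilon_\ell) \in \{0,1\}^\ell$, set
\[
C_\varepsilon := \{n \in \mathbb{N} : \nu_{p_j}(n) \equiv \varepsilon_j \!\!\pmod 2 \text{ for } j = 1, \ldots, \ell\} .
\]
These $2^\ell$ sets are pairwise disjoint and their union is $\mathbb{N}$, and each of them is infinite: given $\varepsilon$, every integer of the form $p_1^{\varepsilon_1} \cdots p_\ell^{\varepsilon_\ell}\, m$ with $\gcd(m, p_1 \cdots p_\ell) = 1$ lies in $C_\varepsilon$, and there are infinitely many such $m$. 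Thus $\{C_\varepsilon\}_{\varepsilon \in \{0,1\}^\ell}$ is a partition of $\mathbb{N}$ into $2^\ell$ infinite sets.

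The key numerical input is that $\ell = \lfloor \log_2 k \rfloor$ gives exactly $2^\ell \leq k$. Hence I can refine the partition $\{C_\varepsilon\}$ into exactly $k$ nonempty sets $A_1, \ldots, A_k$, each contained in a single class: I keep $2^\ell - 1$ of the classes unchanged and, since the remaining class is infinite, break it into $k - 2^\ell + 1 \geq 1$ nonempty pieces. This produces a genuine partition of $\mathbb{N}$ into $k$ sets with the property that each $A_i$ is contained in some $C_{\varepsilon(i)}$.

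It then remains to verify the density failure. Fix $i \in \{1,\ldots,k\}$ and $j \in \{1,\ldots,\ell\}$. Since $A_i \subseteq C_{\varepsilon(i)}$, any $a, b \in A_i$ satisfy $\nu_{p_j}(a) \equiv \nu_{p_j}(b) \equiv \varepsilon(i)_j \pmod 2$, so that $\nu_{p_j}(a/b) = \nu_{p_j}(a) - \nu_{p_j}(b)$ is even. Therefore $R(A_i)$ is contained in the set $S := \{z \in \mathbb{Q}_{p_j}^* : \nu_{p_j}(z) \equiv 0 \pmod 2\}$. But $S$ misses the nonempty open ball $\{z \in \mathbb{Q}_{p_j} : \nu_{p_j}(z - p_j) \geq 2\}$, every element of which has $p_j$-adic valuation exactly $1$; hence $S$, and \emph{a fortiori} $R(A_i)$, is not dense in $\mathbb{Q}_{p_j}$. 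As this holds for every $i$ and every $j$, the partition has the desired property.

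The proof has no real obstacle beyond recognizing the parity-of-valuation device: once $R(A_i) \subseteq S$ is established, non-density is immediate. The only point requiring care is that the bound $\lfloor \log_2 k \rfloor$ is used exactly where it is needed, namely to guarantee $2^\ell \leq k$ so that all $2^\ell$ parity classes can be accommodated among the $k$ parts while leaving each part inside a single class.
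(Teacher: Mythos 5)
Your proof is correct and takes essentially the same approach as the paper's first construction: you partition $\mathbb{N}$ by the parity vector of the $p_j$-adic valuations (your indexing by $\varepsilon \in \{0,1\}^\ell$ is the paper's indexing by subsets $S_j \subseteq \{1, \ldots, \ell\}$ via characteristic functions), and then refine the $2^\ell \leq k$ classes into $k$ parts. The paper also offers a second construction based on Legendre symbols of $n/p_i^{\nu_{p_i}(n)}$ modulo $p_i$, but your argument matches its primary one, merely spelling out the refinement and the non-density verification in more detail.
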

\begin{proof}
We give two different constructions.
Put $h := 2^\ell$ and let $S_1, \ldots, S_h$ be all the subsets of $\{1, \ldots, \ell\}$.
For $j = 1, \ldots, h$, define
\begin{equation*}
B_j := \{n \in \mathbb{N} : \forall i = 1, \ldots, \ell \quad \nu_{p_i}(n) \equiv \chi_{S_j}(i) \pmod 2\} ,
\end{equation*}
where $\chi_{S_j}$ denotes the characteristic function of $S_j$.
It follows easily that $B_1, \ldots, B_h$ is a partition of $\mathbb{N}$, and that none of $R(B_1), \ldots, R(B_h)$ is dense in $\mathbb{Q}_{p_i}$, for $i=1, \ldots, \ell$, since each $R(B_j)$ contains only rational numbers with even $p_i$-adic valuations.
Finally, since $h \leq k$, the partition $B_1, \ldots, B_h$ can be refined to obtain a partition $A_1, \ldots, A_k$ satisfying the desired property.

The second costruction is similar. 
For $j=1,\ldots,h$, define
\begin{equation*}
C_j=\left\{n\in\mathbb{N} : \left(\frac{n/p_i^{v_{p_i}(n)}}{p_i}\right)=(-1)^{\chi_{S_j}(i)}\mbox{ for each }i\in\{1,...,\ell\}\right\} ,
\end{equation*}
where $\left(\frac{a}{p}\right)$ means the Legendre symbol and in case of $p_1=2$ we put $\left(\frac{a}{2}\right) = a \pmod{4}$.
It follows easily that $C_1, \ldots, C_h$ is a partition of $\mathbb{N}$, and that none of $R(C_1), \ldots, R(C_h)$ is dense in $\mathbb{Q}_{p_i}$, for $i = 1, \ldots, \ell$, since each $R(C_j)$ contains only products of powers of $p_i$ and quadratic residues modulo $p_i$ (in case of $p_1 = 2$ we have only products of powers of $2$ and numbers congruent to $1$ modulo $4$).
Finally, since $h \leq k$, the partition $C_1, \ldots, C_h$ can be refined to obtain a partition $A_1, \ldots, A_k$ satisfying the desired property.
\end{proof}

In the light of Remark~\ref{rmk:partZp} it is worth to ask a the following question.

\begin{que}
Let us fix a positive integer $k$. 
What then is the least number $m=m(k)$ such that for each partition $A_1, \ldots, A_k$ of $\mathbb{N}$ there exists a member $A_j$ of this partition such that $R(A_j)$ is dense in $\mathbb{Q}_p$ for all but at most $m$ prime numbers $p$?
\end{que}

In virtue of Remark~\ref{rmk:partZp} we know that $m(k)$ exists and $m(k)\leq k-1$. 
On the other hand, by Theorem~\ref{thm:optQp} the value $m(k)$ is not less than $\left\lfloor\log_2 k\right\rfloor$.

\bibliographystyle{plain}

\end{document}